\documentclass{amsart}

\usepackage{tikz}
\usepackage{amsmath}
\usepackage{amsthm}
\usepackage{mathrsfs}
\usepackage{lmodern}
\usepackage[T1]{fontenc}
\usepackage[utf8]{inputenc}
\usepackage{afterpage}

\usepackage{hyperref}
\hypersetup{colorlinks=true,linkcolor=blue,urlcolor=blue,citecolor=blue}

\usepackage[backend=bibtex,style=alphabetic,firstinits,url=false,doi=false,maxbibnames=50,maxcitenames=50,maxnames=50]{biblatex}

\newtheorem{theorem}[equation]{Theorem}
\newtheorem{proposition}[equation]{Proposition}

\newtheorem{lemma}[equation]{Lemma}
\theoremstyle{definition}

\newcommand{\define}[1]{\textbf{#1}}
\DeclareMathOperator{\lp}{L}
\newcommand{\nbar}{\vert\!\vert}

\newcommand{\Fraczek}{Fr\k{a}czek}
\newcommand{\Lemanczyk}{Lema\'{n}czyk}

\addbibresource{ietRigidity.bib}

\title{Mild mixing of certain interval exchange transformations}
\author{Donald Robertson}

\begin{document}

\begin{abstract}
We prove that irreducible, linearly recurrent, type W interval exchange transformations are always mild mixing.
For every irreducible permutation the set of linearly recurrent interval exchange transformations has full Hausdorff dimension.
\end{abstract}

\maketitle

\section{Introduction}
\label{sec:introduction}

Fix a permutation $\pi$ of $\{1,\dots,d\}$ and positive lengths $\lambda_1,\dots,\lambda_d$ that sum to 1.
Put $\lambda_0 = 0$.
Write
\[
I_i = [\lambda_0 + \cdots + \lambda_{i-1},\lambda_0 + \cdots + \lambda_i)
\]
for each $1 \le i \le d$.
The \define{interval exchange transformation} on $[0,1)$ determined by the data $(\lambda,\pi)$ is the map $T : [0,1) \to [0,1)$ given by
\[
Tx
=
x - \sum_{j < i} \lambda_j + \sum_{\pi j < \pi i} \lambda_j
\]
for all $x$ in $I_i$.
Thus $T$ rearranges the intervals $I_1,\dots,I_d$ according to the permutation $\pi$.
Every interval exchange transformation preserves Lebesgue measure on $[0,1)$.
Katok~\cite{MR594335} showed that interval exchange transformations are never mixing.

If for some $1 \le k < d$ the set $\{1,\dots,k\}$ is $\pi$ invariant then any interval exchange transformation with permutation $\pi$ is the concatenation of interval exchange transformations on fewer intervals.
It is therefore typical to assume that no such $k$ exists.
In this case $\pi$ is said to be \define{irreducible}.
Veech~\cite{MR644019} and Masur~\cite[Theorem~1]{MR644018} showed independently that for any irreducible permutation $\pi$ and almost every choice of $\lambda_1,\dots,\lambda_d$ in the simplex $\lambda_1 + \cdots + \lambda_d = 1$ the corresponding interval exchange transformation is uniquely ergodic.
Veech~\cite[Theorem~1.4]{MR765582} proved the almost every interval exchange transformation is rigid.
Avila and Forni~\cite[Theorem~A]{MR2299743} proved that almost every interval exchange transformation is weak mixing whenever $\pi$ is not a rotation of $\{1,\dots,d\}$.

In this paper we prove that for any permutation in an infinite class introduced by Veech~\cite{MR765582} and studied by Chaves and Nogueira~\cite{MR1878073} under the moniker of type W permutations, the set of $\lambda$ for which the interval exchange transformation determined by $(\lambda,\pi)$ is mild mixing has full Haudsorff dimension.

\begin{theorem}
\label{thm:mainTheorem}
For every irreducible, type W permutation $\pi$ on $\{1, \dots, d \}$ the set of lengths $(\lambda_1,\dots,\lambda_d)$ for which $(\lambda,\pi)$ is mild mixing has full Hausdorff dimension.
\end{theorem}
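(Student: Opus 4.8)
The plan is to deduce Theorem~\ref{thm:mainTheorem} from the two statements advertised in the abstract and then combine them by monotonicity of Hausdorff dimension. Write $\Delta_d = \{(\lambda_1,\dots,\lambda_d) : \lambda_i > 0, \ \sum_{i} \lambda_i = 1\}$ for the parameter simplex, which has Hausdorff dimension $d-1$. I would first isolate the purely dynamical input as a lemma: every irreducible, linearly recurrent, type W interval exchange transformation is mild mixing. I would then isolate the metric input: for every irreducible $\pi$ the set $L(\pi) \subseteq \Delta_d$ of lengths giving a linearly recurrent transformation satisfies $\dim_{H} L(\pi) = d-1$. Granting both, the set of mild mixing lengths for a type W permutation contains $L(\pi)$, so its Hausdorff dimension is at least $\dim_{H} L(\pi) = d-1$ and at most $d-1$ trivially, hence equal; this last step is immediate.

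For the metric statement I would run Rauzy--Veech renormalization as a multidimensional analogue of the continued fraction algorithm. Linear recurrence corresponds to a bounded-type condition: the products of the Rauzy--Veech cocycle matrices along the renormalization orbit return to a balanced configuration, with uniformly comparable entries, infinitely often and with bounded gaps. I would construct, for each bound $N$, a Cantor set of lengths by retaining only those infinite admissible Rauzy paths whose cocycle re-enters a fixed compact balanced region within time $N$, realised as the limit set of the corresponding sub-family of inverse branches, which are projective contractions of $\Delta_d$ given by nonnegative integer matrices. A mass-distribution and pressure estimate on this self-affine limit set, in the spirit of Jarn\'{\i}k's theorem that badly approximable numbers have full dimension, yields a lower bound on $\dim_{H}$ tending to $d-1$ as $N \to \infty$. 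The balance condition keeps the branch distortions controlled, so the self-affine estimate behaves essentially conformally and the lower bound is clean; letting $N \to \infty$ gives full dimension.

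The dynamical statement is the main obstacle, and I would attack it through the characterisation of mild mixing by the absence of non-trivial rigid factors: equivalently, there is no non-constant $f \in \lp^2([0,1))$ and no sequence $n_k \to \infty$ with $f \circ T^{n_k} \to f$ in $\lp^2$. Linear recurrence furnishes a nested sequence of Rokhlin castles $\mathcal{Z}_k$ with a bounded number of towers, balanced heights and widths, and bounded distortion; any rigidity time $n_k$ must then be matched, level by level, against integer combinations of the tower heights of some $\mathcal{Z}_m$, up to boundary errors concentrated near the tops of the towers. The role of the type W condition is to control precisely these boundary contributions: it should force the return-time (discontinuity) cocycle of $T$ to be non-degenerate, so that the displacements $T^{n_k}x - x$ retain a definite spread on every non-trivial factor and cannot converge to the identity. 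I would formalise this as a non-rigidity criterion for the cocycle, in the spirit of the mild mixing criteria of \Fraczek{} and \Lemanczyk{}, and verify its hypotheses from the type W combinatorics together with the balance supplied by linear recurrence; weak mixing, ruling out the rotation (eigenvalue) factors, comes from the same ingredients since type W permutations are not rotations. The substantive difficulty, and the part I expect to be hardest, is translating the combinatorial type W condition into the quantitative non-rigidity of this cocycle while bounding the tower-top error terms uniformly in $k$.
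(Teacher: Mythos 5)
Your top-level reduction is exactly the paper's: mild mixing lengths contain the linearly recurrent lengths (for type W $\pi$), and the latter set has full Hausdorff dimension in the simplex. But both pillars of your proposal are left as research programs rather than proofs, and in each case the plan diverges from what actually works. For the metric pillar, the paper does not build a Cantor set by hand: it cites the result (from the proof of Theorem~1.4 in \cite{MR3296560}, extending \cite{MR2049831}) that the badly approximable interval exchange transformations form a strong winning set, and then proves a short folklore proposition that badly approximable implies linearly recurrent. Your proposed self-affine construction via Rauzy--Veech inverse branches faces a real obstruction you wave away: the cocycle matrices are genuinely non-conformal projective contractions, and the assertion that the balance condition makes the dimension estimate ``behave essentially conformally'' is precisely the hard point -- the known full-dimension results avoid it by playing Schmidt games rather than running a Jarn\'{\i}k-type mass distribution argument. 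As written, this step would not go through without substantial new work.

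The more serious gap is in the dynamical pillar, where you misidentify the mechanism by which type W enters. You propose a \Fraczek{}--\Lemanczyk{}-style non-rigidity criterion for a return-time cocycle, with type W forcing ``non-degeneracy'' of displacements; but those criteria concern special flows over rotations or interval exchanges, not the interval exchange itself, and nothing in your sketch explains how to handle an arbitrary rigid $f \in \lp^2$ (matching rigidity times against tower heights controls rigidity of the \emph{transformation}, not of a single factor function). The paper's argument is different and direct: type W means $0$ and $1$ lie in distinct components of the endpoint identification graph, so the loop $0 \to \zeta_1 \to \cdots \to \zeta_s \to 0$ through $0$ avoids $1$. Given $T^{n_i}f \to f$, one picks (Lusin plus the rigidity sequence) a time $n$ with $\epsilon_n$ small and the rigidity set large, erects the towers of height $n$ over small intervals centered at each $\zeta_j$, and observes that each edge $\zeta_j \to \zeta_{j+1}$ (an identification $T_-(\zeta_j) = T_+(\zeta_{j+1})$) glues the tops of two towers into a common bottom floor, so approximate equalities $|f_i(y_j) - f_i(y_{j+1})| < \delta/2s$ chain around the loop; closing the loop at $0$ converts rigidity along $n$ into the estimate $|f_i(x) - f_i(T^{-1}x)| < \delta$ on a set of measure at least $c/10$, whence $f$ is constant by Lemma~\ref{lem:quantitaveErgodicity} and ergodicity (\cite[Theorem~5.2]{MR2060994}). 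That the chain must avoid the vertex $1$ -- which is exactly what type W guarantees -- is the content of the hypothesis, and it is absent from your sketch; without it your ``boundary control'' heuristic has no purchase, and the part you flag as hardest is in fact the entire proof.
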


In fact, we will prove in Section~\ref{sec:proof} that whenever $\pi$ is type W and $\lambda$ is such that $(\lambda,\pi)$ is linearly recurrent (see Section~\ref{sec:linearRecurrence}) then $(\lambda,\pi)$ is mild mixing.
It follows from work of Kleinbock and Weiss~\cite{MR2049831} that, for a fixed irreducible permutation $\pi$, the set of such $\lambda$ in the simplex has full Hausdorff dimension.
(See Theorem~\ref{thm:linRecWinning} below.)
Chaika, Cheung and Masur~\cite{MR3296560} have extended Klienbock and Weiss's result by showing that the set is winning for Schmidt's game, which implies full Hausdorff dimension.

For interval exchanges on three intervals Theorem~\ref{thm:mainTheorem} is a consequence of work by Ferenczi, Holton and Zamboni~\cite[Theorem~4.1]{MR2129107} who showed that, for such interval exchanges, linear recurrence implies minimal self-joinings and hence mild mixing.
Boshernitzan and Nogueira~\cite[Theorem~5.3]{MR2060994} showed that all interval exchange transformations that are type W and linearly recurrent are weakly mixing.
We remark that examples due to Himli~\cite{hmili} prove this is not the case in general: indeed taking $m = 4$ in the first example of Section~3 therein gives a permutation that is not type W and an interval exchange transformation that has a non-constant eigenfunction; one can verify that the interval exchange transformation is linearly recurrent provided $\alpha$ is badly approximable.

For flows on surfaces \Fraczek{} and \Lemanczyk{}~\cite{MR2237466} showed that special flows over irrational rotations with bounded partial quotients whose roof function is piecewise absolutely continuous and has non-zero sum of jumps is always mild mixing.
Subsequently, \Fraczek{}, \Lemanczyk{} and Lesigne~\cite{MR2342268} gave a criterion for a piecewise constant roof function over an irrational rotation with bounded partial quotients to be mild mixing.
Using this criterion \Fraczek{}~\cite{MR2538574} has shown that, when the genus is at least two, the set of Abelian differentials for which the vertical flow is mild mixing is dense in every stratum of moduli space.
More recently, Kanigowski and Ku\l{}aga-Przymus~\cite{kanigowskiKulaga2015} showed that roof functions over interval exchange transformation having symmetric logarithmic singularities at some of the discontinuities of the interval exchange transformation give rise to special flows that are mild mixing.

Sections~\ref{sec:typeW} and \ref{sec:linearRecurrence} contain the facts we need about type W and lienarly recurrent interval exchange transformations respectively.
In Section~\ref{sec:rigidity} we discuss rigidity and mild mixing.

We would like to thank Jon Chaika for suggesting this project and for his help and patience during conversations related to it.
The author gratefully acknowledges the support of the NSF via grant DMS-1246989.

\section{Type W permutations}
\label{sec:typeW}

Fix a permutation $\pi$ on $\{1,\dots,d\}$.
Define a permutation $\sigma$ of $\{ 0,\dots,d\}$ by
\[
\sigma(j)
=
\begin{cases}
\pi^{-1}(1) - 1 & j = 0 \\
d & \pi(j) = d \\
\pi^{-1}(\pi(j) + 1) - 1 & \textrm{otherwise}
\end{cases}
\]
and write $\Sigma(\pi)$ for the set of orbits of $\sigma$.
This auxiliary permutation was introduced by Veech~\cite{MR644019}.
It describes which intervals are adjacent after an application of any interval exchange transformation defined by the permutation $\pi$.

We can represent $\Sigma(\pi)$ as a directed graph with $\{0,\omega_1,\dots,\omega_{d-1},1\}$ as its set of vertices: writing $\omega_0 = 0$ and $\omega_d = 1$ there is an edge from $\omega_i$ to $\omega_j$ if and only if $\sigma(i) = j$.
The edge with source $0$ and the edge with target $1$ correspond, respectively, to the first two cases in the definition of $\sigma$.
Call this graph the \define{endpoint identification graph}.
Say that $\pi$ is \define{type W} if the vertices $0$ and $1$ are in distinct components of the graph.
Some examples of type W permutations are given in \cite[Section~5]{MR2504744}.

Define
\[
T_+(a) = \lim_{x \to a+} T(x)
\qquad
T_-(a) = \lim_{x \to a-} T(x)
\]
for all $a$ in $[0,1)$ and $(0,1]$ respectively.
The endpoint identification graph contains the edge $0 \to \omega_k$ if and only if $0 = T_+(\omega_k)$ and the edge $\omega_j \to 1$ if and only if $T_-(\omega_j) = 1$.
All other edges $\omega_j \to \omega_k$ correspond to equalities $T_-(\omega_j) = T_+(\omega_k)$ where $\omega_j \ne 0$ and $\omega_k \ne 1$.

\section{Linear recurrence}
\label{sec:linearRecurrence}

Fix an irreducible permutation $\pi$ on $\{1,\dots,d\}$.
Given lengths $\lambda_1,\dots,\lambda_d$ put $\beta_i = \lambda_1 + \cdots + \lambda_i$ for all $1 \le i \le d-1$.
Let $D = \{ \beta_1,\dots,\beta_{d-1} \}$.
One says that $(\lambda,\pi)$ satisfies the \define{infinite distinct orbits condition} if $D \cap T^{-n} D = \emptyset$ for all $n$ in $\mathbb{N}$.
Keane~\cite{MR0357739} showed that the infinite distinct orbits condition implies minimality.

Assuming the infinite distinct orbits condition, the set
\[
\bigcup \{ T^{-i} D : 0 \le i \le n \}
\]
partitions $[0,1)$ into sub-intervals of positive length.
Write $\epsilon_n$ for the length of the shortest interval in this partition.
It was observed in \cite{MR961737} that if $T$ satisfies the infinite distinct orbits condition and $J \subset [0,1)$ is an interval of length at most $\epsilon_n$ then there are times $p,q \ge 0$ with $p + q = n - 1$ such that the sets
\begin{equation}
\label{eqn:ietTower}
T^{-p} J, T^{-p+1}J,\dots,J,\dots,T^{q-1}J, T^q J
\end{equation}
are disjoint intervals.
Moreover $T^{k}J = T^{k-1}J + \alpha_k$ for all $-p < k \le q$, which is to say that each interval is a translate of the previous one under $T$.
We call \eqref{eqn:ietTower} the \define{tower} defined by $J$.
Call $T^{-p}J$ the \define{bottom floor} of the tower and $T^q J$ the \define{top floor} of the tower -- see Figure~\ref{fig:tower} for a schematic.
Write $\tau$ for the union of the sets in \eqref{eqn:ietTower}.
If $J$ contains a discontinuity of $T$ then $q = 0$, and if $J$ contains a discontinuity of $T^{-1}$ then $p = 0$.
Note that disjointness of the intervals implies $n \epsilon_n \le 1$ so we must have $\epsilon_n \to 0$ as $n \to \infty$.

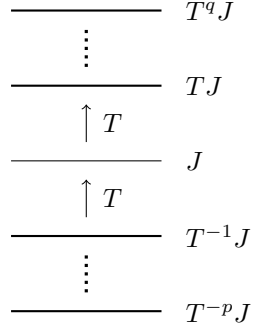
\begin{figure}
\centering
\begin{tikzpicture}
\draw[thick] (-1,0)--(1,0);
\node[anchor=west] at (1.2,0) {$T^{-p} J$};
\draw[thick] (-1,1)--(1,1);
\node[anchor=west] at (1.2,1) {$T^{-1} J$};
\draw (-1,2)--(1,2);
\node[anchor=west] at (1.2,2) {$J$};
\draw[thick] (-1,3)--(1,3);
\node[anchor=west] at (1.2,3) {$T J$};
\draw[thick] (-1,4)--(1,4);
\node[anchor=west] at (1.2,4) {$T^{q} J$};
\draw[->] (0,1.25) -- (0,1.75);
\node[anchor=west] at (0.1,1.5) {$T$};
\draw[->] (0,2.25) -- (0,2.75);
\node[anchor=west] at (0.1,2.5) {$T$};
\draw[dotted,very thick] (0,0.25)--(0,0.75);
\draw[dotted,very thick] (0,3.25)--(0,3.75);
\end{tikzpicture}
\caption{The tower determined by the interval $J$. Each interval is a translate of the one below under $T$.}
\label{fig:tower}
\end{figure}

An interval exchange transformation is \define{linearly recurrent} if
\[
\inf \{ n \epsilon_n : n \in \mathbb{N} \} \ge c
\]
for some positive constant $c$.
Thus for linearly recurrent interval exchange transformations the tower \eqref{eqn:ietTower} determined by any interval $J$ of length $\epsilon_n$ has measure at least $c$.

Fix an irreducible permutation $\pi$.
We conclude this section with a proof of the following result.

\begin{theorem}
\label{thm:linRecWinning}
For every irreducible permutation $\pi$ the set of $\lambda$ for which the interval exchange transformation $(\lambda,\pi)$ is linearly recurrent is strong winning.
\end{theorem}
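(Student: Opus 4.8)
The plan is to reduce linear recurrence to a bounded-type condition on the Rauzy--Veech--Zorich renormalization of $(\lambda,\pi)$ and then to win McMullen's strong game while steering the parameter $\lambda$ so that this condition holds. The first task is to make precise the analogy with badly approximable rotation numbers. Writing $T = T_\lambda$ and inducing $T$ on a decreasing sequence of subintervals produces a sequence of induced interval exchanges with length vectors $\lambda^{(0)},\lambda^{(1)},\dots$ related by nonnegative integer transition matrices $A_1,A_2,\dots$, namely the Rauzy--Veech cocycle. I would first show that $\inf_n n\epsilon_n > 0$ if and only if this cocycle has bounded type, meaning $\sup_m \|A_m\| < \infty$ after passing to the Zorich acceleration; equivalently, the forward Teichm\"uller geodesic through the zippered-rectangle suspension of $(\lambda,\pi)$ stays in a compact subset of the stratum. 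The two directions here are standard comparisons: $\epsilon_n$ is comparable to the shortest interval length appearing in a renormalization whose towers have total height about $n$, and linear recurrence says precisely that these shortest and tallest towers carry comparable measure at every scale, which is the balance statement controlled by $\|A_m\|$.

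Granting this equivalence, the set in question is exactly the set of $\lambda$ in the simplex whose renormalization orbit is bounded, and this is the setting of \cite{MR2049831} and \cite{MR3296560}. My plan is to play the strong game on the simplex $\{\lambda_1 + \cdots + \lambda_d = 1\}$ directly, using the Rauzy--Veech subdivision as a scaffold. This subdivision partitions the simplex, up to a null set, into subsimplices indexed by finite admissible paths in the Rauzy diagram, the subsimplex attached to a path being the image of the simplex under the projectivized transition matrix of that path. At each turn Bob presents a ball $B_m$; because the cylinders at a given combinatorial depth have comparable diameters and the nesting has bounded distortion, Alice can place her ball $A_m \subseteq B_m$ of radius at least a fixed fraction of that of $B_m$ inside a cylinder whose next renormalization step uses a matrix of bounded norm. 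Iterating, Alice forces every sufficiently deep cylinder containing the outcome $\lambda = \bigcap_m A_m$ to arise from bounded-norm steps, which is exactly bounded type.

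The main obstacle is the geometry of the Rauzy--Veech cylinders: to run the strong game one needs uniform, scale-independent control on the shapes of the subsimplices — their eccentricity, the comparability of sizes among siblings, and the bounded distortion of the projective maps defining them — so that the fraction of $B_m$ that Alice can retain is bounded below by a constant $\alpha$ independent of Bob's radii and of the depth. This quantitative estimate is the heart of the matter and is where the work of Kleinbock--Weiss and Chaika--Cheung--Masur enters; the remaining point is to check that their game-theoretic construction survives in the strong game of McMullen, where Alice's response is constrained only by the radius ratio and Bob may contract by an arbitrary amount, rather than only in Schmidt's original game. Once the estimates are arranged uniformly, strong winning — and hence winning and full Hausdorff dimension — follows, and by the equivalence of the first paragraph this is precisely the set of linearly recurrent $(\lambda,\pi)$.
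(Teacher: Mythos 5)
You have left the entire strong-winning content of the argument unproven, and the uniformity claims on which your game construction rests are false as stated. The Rauzy--Veech subdivision of the simplex does \emph{not} have cylinders of comparable diameter at a given combinatorial depth, nor bounded distortion of the nesting: the projectivized transition matrices are not uniformly contracting in the Hilbert metric, sibling cylinders at the same depth can have wildly incomparable sizes (a path of $n$ consecutive Rauzy moves of the same type yields a cylinder whose size decays only polynomially in $n$, while other siblings decay exponentially), and the eccentricity of cylinders is unbounded. This lack of uniform hyperbolicity is precisely the obstacle that Kerckhoff-type estimates and the Zorich acceleration exist to manage, and overcoming it is the actual substance of \cite{MR2049831} and \cite{MR3296560}. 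Your sketch names this as ``the heart of the matter'' and then defers it, together with the verification that the construction survives the passage from Schmidt's game to McMullen's strong game (where Bob may contract arbitrarily); so the proposal is a plan for a proof rather than a proof. A further loose end: even granting everything, Theorem~1.4 of \cite{MR3296560} concerns the set of \emph{badly approximable} interval exchanges, i.e.\ those with $\inf\{ n|q - T^n(p)| \} > 0$ over discontinuities $p,q$, not the bounded-type set your renormalization equivalence produces, so you would still need a bridge between the two notions.

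The paper's route avoids all of this. It quotes from the proof of Theorem~1.4 of \cite{MR3296560} that the badly approximable $\lambda$ form a strong winning set, and then supplies exactly the missing bridge by an elementary argument: if $T$ is badly approximable with constant $c$ but not linearly recurrent, choose $n$ with $n\epsilon_n \le c/2$ and $\epsilon_n$ smaller than the spacing of discontinuities, write $\epsilon_n = |T^l(q) - T^m(p)|$ for discontinuities $p,q$, and pull back: no discontinuity of $T^{-1}$ can lie between the two points without contradicting minimality of $\epsilon_n$, so $|q - T^{m-l}(p)| = \epsilon_n$ with $m - l \ge 1$, whence $c \le (m-l)\epsilon_n \le n\epsilon_n \le c/2$, a contradiction. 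No game is played and no renormalization geometry is needed. If you want to salvage your approach, the shortest repair is to replace your cylinder construction with a citation of the strong-winning statement from \cite{MR3296560} and then prove the implication ``badly approximable $\Rightarrow$ linearly recurrent'' directly, which is the one genuinely elementary step; your proposed equivalence of linear recurrence with bounded Zorich cocycle is plausible folklore but is both harder to make precise and, for this theorem, unnecessary.
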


An interval exchange transformation $T$ is \define{badly approximable} if
\[
\inf \{ n |q - T^n(p)| : n \in \mathbb{N} \} > 0
\]
holds for all discontinuities $p,q$ of $T$.
Note that badly approximable implies the infinite distinct orbits condition.
It follows from the proof of Theorem~1.4 in \cite{MR3296560} that those $\lambda$ for which $(\lambda,\pi)$ is badly approximable is strong winning and in particular has full Hausdorff dimension.
To prove Theorem~\ref{thm:linRecWinning} it therefore suffices to prove that every badly approximable interval exchange transformation is linearly recurrent.
We give a proof of this folklore result for completion.

\begin{proposition}
Fix an irreducible permutation $\pi$.
If for some $\lambda$ the interval exchange transformation defined by $(\lambda,\pi)$ is badly approximable then it is linearly recurrent.
\end{proposition}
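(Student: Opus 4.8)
The plan is to show that badly approximability forces the length $\epsilon_n$ of the shortest interval of the partition generated by $\bigcup_{i=0}^{n}T^{-i}D$ to equal, up to a bounded factor, the distance between a discontinuity of $T$ and a point on the forward orbit of a discontinuity; the Diophantine hypothesis then gives $n\epsilon_n\ge\delta$ at once. First I would record the standing consequences of the hypothesis. Since badly approximable implies the infinite distinct orbits condition, Keane's minimality theorem applies and every $\epsilon_n$ is a positive length. Moreover that condition reads $D\cap T^{-k}D=\emptyset$ for all $k\ge 1$, so no discontinuity maps to a discontinuity; hence every boundary point $z$ of the partition, other than $0$ and $1$, lies in exactly one set $T^{-i}D$ with $0\le i\le n$, and I call this unique $i$ the \emph{level} of $z$. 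Fix $\delta>0$ witnessing badly approximability, so that $m\,\lvert\beta_q-T^m\beta_p\rvert\ge\delta$ for all $m\ge 1$ and discontinuities $\beta_p,\beta_q$, where the orbit is read as the appropriate one-sided orbit (equivalently, the condition is imposed for $T$ and for $T^{-1}$). My goal is to produce, for each $n$, a time $1\le m\le n+1$ and discontinuities with $\epsilon_n=\lvert\beta_q-T^m\beta_p\rvert$, whence $n\epsilon_n\ge\tfrac{n}{n+1}\delta$.

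The heart of the argument is a push-forward. Let $[x,y)$ be a partition interval realizing $\epsilon_n=y-x$. Because its interior meets no boundary point of the partition, it avoids the discontinuity set $\bigcup_{i=0}^{k-1}T^{-i}D$ of $T^k$ for every $k\le n+1$, so each such $T^k$ acts on $[x,y)$ as an orientation-preserving translation. Let $a$ and $b$ be the levels of $x$ and $y$, and assume $a\le b$. Applying the translation $T^a$ sends $x$ to the discontinuity $\beta_p=T^a x$ and sends the interval onto $[\beta_p,\beta_p+\epsilon_n)$. I would then check that this image meets no discontinuity of $T^{m}$ for $m=b-a$: since $T^a$ carries $T^{-i}D\cap(x,y)$ bijectively onto $T^{-(i-a)}D\cap(\beta_p,\beta_p+\epsilon_n)$ and the former sets are empty for $a\le i\le n$, the image avoids $T^{-k}D$ for $0\le k\le n-a$, and $m-1\le n-a$. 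Hence $T^{m}$ is a translation on $[\beta_p,\beta_p+\epsilon_n)$ sending $\beta_p+\epsilon_n$ (as a left limit) to $T^b y=\beta_q$, so $T^m\beta_p=\beta_q-\epsilon_n$ and $\epsilon_n=\lvert\beta_q-T^m\beta_p\rvert$ with $1\le m\le n$. When $b<a$ the symmetric computation gives the same conclusion with the roles reversed, producing a one-sided orbit on the opposite side (a backward orbit), which is exactly why the condition is invoked for both $T$ and $T^{-1}$. The equal-level case $a=b$ makes $\epsilon_n$ the distance between two distinct discontinuities, so $\epsilon_n\ge\rho:=\min_{i\ne j}\lvert\beta_i-\beta_j\rvert>0$.

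It remains to treat a minimal-gap interval abutting an endpoint, and here irreducibility enters. Since $\pi$ is irreducible we have $\pi(1)\ne 1$ and $\pi(d)\ne d$, so $0=T_+(\beta_r)$ is the forward image of the discontinuity $\beta_r=\beta_{\pi^{-1}(1)-1}$ and $1=T_-(\beta_s)$ is the one-sided forward image of the discontinuity $\beta_s=\beta_{\pi^{-1}(d)}$. If $x=0$ I apply $T^b$ as above to obtain $\epsilon_n=\lvert\beta_q-T^{b+1}\beta_r\rvert$, and if $y=1$ I apply $T^a$ to obtain $\epsilon_n=\lvert\beta_p-T^{a+1}_-\beta_s\rvert$, in each case at a time at most $n+1$. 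Combining all cases, badly approximability yields $m\epsilon_n\ge\delta$ for the relevant $m\le n+1$, so $n\epsilon_n\ge\min\{\tfrac12\delta,\rho\}>0$ for every $n$, which is precisely linear recurrence with that constant.

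I expect the main obstacle to be the push-forward bookkeeping of the second paragraph: verifying that $T^m$ remains a translation on the transported interval, so that $\epsilon_n$ is genuinely the distance from a discontinuity to a singularity orbit, and tracking the one-sided limits with enough care that both orientations ($a\le b$ and $b<a$) and both boundary points $0,1$ are uniformly subsumed by the (symmetric) badly-approximable hypothesis. Everything else is the routine observation that $n\ge m$ converts $m\epsilon_n\ge\delta$ into the desired uniform lower bound on $n\epsilon_n$.
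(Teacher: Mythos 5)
Your argument is correct in substance and shares the paper's core mechanism---realize $\epsilon_n$ as the distance from a discontinuity to an orbit point of a discontinuity at time at most $n$ (at most $n+1$ in the boundary cases), invoke the badly approximable bound, and dispose of the equal-time case via the minimal spacing $\rho$ (the paper's $\eta$)---but you run the transport in the mirror direction, and that redistributes the burden of proof. The paper argues by contradiction, asserting a forward realization $\epsilon_n = \lvert T^l(q)-T^m(p)\rvert$ with $0\le l\le m\le n$ and pulling back by $T^{-1}$; each pull-back step is justified by minimality of $\epsilon_n$ together with the observation that every discontinuity of $T^{-1}$ has the form $T^i(r)$, $i\in\{1,2\}$, for a discontinuity $r$ of $T$---which is exactly where irreducibility and the identity $0=T(\beta_{\pi^{-1}(1)-1})$ enter, silently. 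Your explicit boundary cases $x=0$ and $y=1$ make that hidden input visible, and your push-forward needs no minimality comparison at all, since the interior of a minimal gap avoids the partition by construction (this is also why the paper must take $n\ge 2$ and you need not). The one substantive divergence: when the left endpoint has the deeper level ($a>b$), your realization $\epsilon_n=\lvert\beta_q-T^{-m}\beta_p\rvert$ involves a backward (equivalently, left-limit) orbit, which the paper's literal hypothesis $\inf\{n\lvert q-T^n(p)\rvert\}>0$ does not directly bound; your parenthetical claim that imposing the condition for $T$ and for $T^{-1}$ is ``equivalent'' is asserted rather than proved, and it is not a one-line deduction, so strictly speaking you establish the proposition under a mildly strengthened reading of badly approximable. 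But the paper's proof faces the identical issue in mirror image---its forward realization fails with actual orbit values precisely in your case $a>b$, where only the left limit $T^l_-(q)$ works---so your account is, if anything, the more careful of the two; the clean fix is simply to state the two-sided condition as the hypothesis rather than to call it equivalent.
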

\begin{proof}
Fix an interval exchange transformation $T$ that is badly approximable.
Let $c$ be the minimum of the quantities
\[
\inf \{ n |q - T^n(p)| : n \in \mathbb{N} \}
\]
over all discontinuities $p,q$ of $T$.
By hypothesis $c$ is positive.
Let $\eta$ be the minimal spacing between discontinuities of $T$.

Suppose that $T$ is not linearly recurrent.
Then $n \epsilon_n \le c/2$ and $\epsilon_n < \eta$ for some $n \ge 2$.
Fix $0 \le l \le m \le n$ and discontinuities $p,q$ of $T$ such that $\epsilon_n = |T^l(q) - T^m(p)|$.
We claim that if $1 \le l$ there is no discontinuity of $T^{-1}$ between $T^m(p)$ and $T^l(q)$.
Indeed, all discontinuities of $T^{-1}$ are of the form $T^i(r)$ for some $i \in \{1,2\}$ and some discontinuity $r$ of $T$, so the existence of a discontinuity of $T^{-1}$ between $T^m(p)$ and $T^l(q)$ contradicts $\epsilon_n = |T^l(q) - T^m(p)|$.
Thus $|T^{l-1}(q) - T^{m-1}(p)| = \epsilon_n$.
Iterating gives $|q - T^{m-l}(p)| = \epsilon_n$.
Since $\epsilon_n < \eta$ we must have $0 < m-l$.
But then
\[
c \le \inf \{ n |q - T^n(p)| : n \in \mathbb{N} \} \le (m-l)|q - T^{m-l}(p)| = (m-l) \epsilon_n \le n \epsilon_n \le c/2
\]
which is absurd.
\end{proof}

\section{Rigidity}
\label{sec:rigidity}

A measure-preserving transformation $T$ on a probability space $(X,\mathscr{B},\mu)$ is \define{rigid} if there is a sequence $i \mapsto n_i$ in $\mathbb{N}$ with $n_i \to\infty$ such that for every $f$ in $\lp^2(X,\mathscr{B},\mu)$ one has $T^{n_i} f \to f$ in $\lp^2(X,\mathscr{B},\mu)$.
We remark (see, for instance \cite[Section~2]{MR3255429}) that if a measure-preserving transformation $T$ on a probability space $(X,\mathscr{B},\mu)$ has the property that for every $f$ in $\lp^2(X,\mathscr{B},\mu)$ there is a a sequence $i \mapsto n_i$ such that $T^{n_i} f \to f$ in $\lp^2(X,\mathscr{B},\mu)$ then it is rigid.

\begin{lemma}
\label{lem:rigidityChar}
Let $T$ be a rigid, measure-preserving transformation on a probability space $([0,1),\mathscr{B},\mu)$ where $X = [0,1)$ and $\mu$ is Lebesgue measure.
Then $T$ is rigid if and only if there is a sequence $n_i \to \infty$ such that
\begin{equation}
\label{eqn:rigidityChar}
\mu( \{ x \in [0,1) : |T^{n_i} x - x| > \epsilon \} ) \to 0
\end{equation}
for every $\epsilon > 0$.
\end{lemma}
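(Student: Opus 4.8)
The plan is to prove the two implications separately, and I expect one direction to be immediate while the other rests on a standard approximation argument.

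For the direction from rigidity to \eqref{eqn:rigidityChar}, let $i \mapsto n_i$ be a sequence witnessing rigidity. I would apply the defining property to the single function $g(x) = x$, which lies in $\lp^2([0,1),\mathscr{B},\mu)$ because it is bounded. Rigidity gives $g \circ T^{n_i} \to g$ in $\lp^2$, which is exactly
\[
\int_{[0,1)} |T^{n_i} x - x|^2 \, d\mu(x) \to 0.
\]
Convergence in measure then follows from Chebyshev's inequality: for every $\epsilon > 0$,
\[
\mu(\{ x : |T^{n_i} x - x| > \epsilon \}) \le \epsilon^{-2} \int_{[0,1)} |T^{n_i} x - x|^2 \, d\mu(x) \to 0,
\]
which is \eqref{eqn:rigidityChar}.

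For the converse, suppose $n_i \to \infty$ satisfies \eqref{eqn:rigidityChar} for every $\epsilon > 0$. I would first verify that $g \circ T^{n_i} \to g$ in $\lp^2$ for every bounded uniformly continuous $g$, and then upgrade to all of $\lp^2$ by density. For the first step, fix such a $g$ with $\|g\|_\infty \le M$ and let $\delta > 0$. Uniform continuity provides $\epsilon > 0$ so that $|x - y| \le \epsilon$ forces $|g(x) - g(y)| \le \delta$; splitting the integral $\int |g(T^{n_i} x) - g(x)|^2 \, d\mu$ according to whether $|T^{n_i} x - x| \le \epsilon$ or not, the first piece is at most $\delta^2$ and the second is at most $4M^2 \mu(\{ |T^{n_i} x - x| > \epsilon \})$, which tends to $0$ by hypothesis. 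Hence $\limsup_i \|g \circ T^{n_i} - g\|_2^2 \le \delta^2$, and letting $\delta \to 0$ gives the convergence. To pass to an arbitrary $f$ in $\lp^2$, I would use that the Koopman operators $h \mapsto h \circ T^{n_i}$ are isometries of $\lp^2$ (since $T$ preserves $\mu$) together with the density of bounded uniformly continuous functions in $\lp^2([0,1),\mathscr{B},\mu)$: given $\eta > 0$, choose such a $g$ with $\|f - g\|_2 < \eta$, and a triangle-inequality estimate bounds $\|f \circ T^{n_i} - f\|_2$ by $2\eta + \|g \circ T^{n_i} - g\|_2$, whose $\limsup$ is at most $2\eta$.

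The only genuine obstacle is this converse direction, where one must control $f \circ T^{n_i} - f$ for every $\lp^2$ function rather than merely for the identity. The two ingredients that resolve it --- that a small displacement $|T^{n_i} x - x|$ produces a small change in a uniformly continuous test function, and that the uniformly bounded Koopman operators allow approximation in $\lp^2$ --- are both standard, so I expect no essential difficulty to remain.
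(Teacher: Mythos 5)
Your proof is correct and takes essentially the same route as the paper: the forward direction applies rigidity to the identity function and uses Chebyshev's inequality, and the converse establishes $\lp^2$-convergence first for (uniformly) continuous test functions and then for all of $\lp^2$ by density, which is exactly what the paper's two-line proof sketches. Your write-up simply supplies the details---the splitting estimate and the isometry-plus-triangle-inequality step---that the paper leaves implicit.
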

\begin{proof}
If $T$ is rigid then $\nbar T^{n_i} \iota - \iota \nbar \to 0$ in $\lp^2(X,\mathscr{B},\mu)$ and \eqref{eqn:rigidityChar} holds by Chebychev's inequality.
Conversely, one can use \eqref{eqn:rigidityChar} to prove that $\nbar T^{n_i} f - f \nbar \to 0$ for all continuous functions $f$ on $[0,1)$.
\end{proof}

A measure-preserving transformation $T$ is \define{mildly mixing} if it has no non-trivial rigid factors.
One can show that this is equivalent to the absence of non-constant functions $f$ in $\lp^2(X,\mathscr{B},\mu)$ such that $T^{n_i} f \to f$ in $\lp^2(X,\mathscr{B},\mu)$.
Indeed, given a particular sequence $i \mapsto n_i$ the subspace
\[
\{ f \in \lp^2(X,\mathscr{B},\mu) : T^{n_i} f \to f \}
\]
can be shown to be of the form $\lp^2(X,\mathscr{C},\mu)$ for some $T$ invariant sub-$\sigma$-algebra $\mathscr{C}$, and the corresponding factor is non-trivial if the above subspace contains non-constant functions.
Conversely, any rigid function on a factor lifts to a rigid function on the original system.

\section{Proof of main theorem}
\label{sec:proof}

In this section we will prove Theorem~\ref{thm:mainTheorem}.
We begin with the following lemma.

\begin{lemma}
\label{lem:quantitaveErgodicity}
Let $T$ be an ergodic, measure-preserving transformation on a probability space $(X,\mathscr{B},\mu)$.
If, given $f$ in $\lp^2(X,\mathscr{B},\mu)$, one can find a constant $\rho > 0$ such that
\[
\mu ( \{ x \in X : |f(T^{i+1} x) - f(T^i x)| < \delta \textrm{ for all } -b \le i \le b \} ) \ge \rho
\]
for all $\delta > 0$ and all $b \in \mathbb{N}$ then $f$ is constant.
\end{lemma}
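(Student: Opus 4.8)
The plan is to exploit the uniformity of $\rho$ in both $\delta$ and $b$ to pass to a genuinely $T$-invariant set on which $f$ is constant along orbits, and then to invoke ergodicity. Writing $A_{b,\delta}$ for the set appearing in the hypothesis, I would first fix $b$ and let $\delta \to 0$ along $\delta = 1/n$. These sets decrease as $\delta$ decreases, and the measure is finite, so continuity from above gives that
\[
A_b = \{ x \in X : f(T^{i+1}x) = f(T^i x) \text{ for all } -b \le i \le b \}
\]
satisfies $\mu(A_b) \ge \rho$. The lower bound survives the limit precisely because $\rho$ does not depend on $\delta$.

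Next I would let $b \to \infty$. The sets $A_b$ are nested decreasing, so a second application of continuity from above shows that
\[
A_\infty = \bigcap_{b} A_b = \{ x \in X : f(T^{j} x) = f(x) \text{ for all } j \in \mathbb{Z} \}
\]
satisfies $\mu(A_\infty) \ge \rho > 0$, this time using that $\rho$ is independent of $b$. The chaining of the consecutive equalities identifies $A_\infty$ as exactly the set on which $f$ is constant along the entire two-sided orbit.

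The key structural observation is that $A_\infty$ is $T$-invariant. Indeed, if $x \in A_\infty$, so that $f(T^j x) = f(x)$ for every $j$, then $f(T^j(Tx)) = f(T^{j+1}x) = f(x) = f(Tx)$ for every $j$, whence $Tx \in A_\infty$; the same reasoning applied to $T^{-1}$ shows $A_\infty$ is fully invariant. Since $T$ is ergodic and $\mu(A_\infty) \ge \rho > 0$, we conclude $\mu(A_\infty) = 1$. In particular $f \circ T = f$ almost everywhere, so $f$ is a $T$-invariant element of $\lp^2(X,\mathscr{B},\mu)$ and is therefore constant by ergodicity.

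I do not expect a serious obstacle: the argument is a double application of continuity from above followed by the standard ergodic dichotomy. The one point requiring genuine care — and the reason the hypothesis is phrased with a single $\rho$ valid for all $\delta$ and all $b$ — is that both limiting lower bounds must be preserved, so the uniformity of $\rho$ in each of the two parameters is used in an essential way.
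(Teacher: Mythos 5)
Your proposal is correct and takes essentially the same approach as the paper: both arguments use the uniformity of $\rho$ in the two parameters together with continuity from above to extract a $T$-invariant set of measure at least $\rho$, then conclude via the ergodic dichotomy that $f$ is invariant and hence constant. The only difference is the order of limits --- the paper fixes $\delta$ and sends $b \to \infty$ first, obtaining for each $\delta$ an invariant set of full measure, while you send $\delta \to 0$ first and intersect over $b$ afterwards --- and this reordering is immaterial.
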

\begin{proof}
For each $\delta > 0$ the sets
\[
\{ x \in X : |f(T^{i+1} x) - f(T^i x)| < \delta \textrm{ for all } -b \le i \le b \}
\]
are decreasing as $b \to \infty$ so their intersection has measure at least $\rho$ by hypothesis.
This intersection is $T$ invariant so has full measure by ergodicity.
In particular $\{ x \in X : |f(Tx) - f(x)| < \delta \}$ has full measure.
Since $\delta > 0$ is arbitrary $f$ is $T$ invariant almost surely and therefore constant by ergodicity.
\end{proof}

By Theorem~\ref{thm:linRecWinning} the following result implies Theorem~\ref{thm:mainTheorem}.

\begin{theorem}
A linearly recurrent, type W interval exchange transformation must be mildly mixing.
\end{theorem}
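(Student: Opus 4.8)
The plan is to argue by contradiction using the characterization of mild mixing recorded in Section~\ref{sec:rigidity}: if $T$ is not mildly mixing then there is a non-constant $f$ in $\lp^2([0,1),\mathscr{B},\mu)$ and a sequence $n_i \to \infty$ with $T^{n_i} f \to f$ in $\lp^2$. Writing $\phi = Tf - f$, so that $f(T^{i+1}x) - f(T^i x) = \phi(T^i x)$, the goal is to verify the hypothesis of Lemma~\ref{lem:quantitaveErgodicity} for this $f$, namely to produce a single constant $\rho > 0$ so that for every $\delta > 0$ and every $b \in \mathbb{N}$ the values $\phi(T^{-b}x),\dots,\phi(T^b x)$ are all smaller than $\delta$ on a set of measure at least $\rho$; the lemma then forces $f$ to be constant, a contradiction. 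Linear recurrence enters precisely by supplying this uniform $\rho$.

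First I would fix $b$ and $\delta$ and, for each large $i$, build the tower \eqref{eqn:ietTower} determined by an interval $J_i$ of length $\epsilon_{n_i}$. This tower has height $n_i$ and, by linear recurrence, measure $\mu(\tau_i) \ge c$, so I would aim for a $\rho$ equal to a fixed positive multiple of $c$. For a point $x$ whose floor has index between $b$ and $n_i - 1 - b$, the window $T^{-b}x,\dots,T^b x$ stays inside the tower and runs through consecutive floors, each a translate of the previous, so $\phi(T^i x)$ genuinely compares $f$ on adjacent floors. The telescoping identity $T^{n_i} f - f = \sum_{j=0}^{n_i - 1} T^j \phi$, restricted to the bottom floor, shows that $\sum_{j=0}^{n_i - 1} \phi(T^j x) \to 0$ in $\lp^2$ over the base; since $2b\,\epsilon_{n_i} \to 0$ the top and bottom $b$ floors contribute negligibly. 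Thus the $\phi$-values are small \emph{in aggregate} up almost every column.

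The main obstacle, and the step where I expect type W to be essential, is upgrading this aggregate smallness to pointwise smallness of each individual consecutive difference on a set of measure at least $\rho$. A priori the quantities $\phi(T^j x)$ could oscillate in sign as one climbs a column and cancel, so that the column sum is small while individual differences are not; ruling this out is the whole game. Here I would use the type W hypothesis to control the extreme floors of the tower: because the vertices $0$ and $1$ lie in different components of the endpoint identification graph, the orbits of the left and right endpoints of $[0,1)$ never interact, which pins down how the bottom floor $T^{-p}J_i$ and the top floor $T^q J_i$ are cut off (recall that $J_i$ containing a discontinuity of $T$ forces $q = 0$, and one of $T^{-1}$ forces $p = 0$). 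I would exploit this to show that $f$ cannot reorganise itself across the join between the top of one column and the bottom of the next, forbidding the sign oscillations and yielding a definite coherence of $f$ along each column. Translating the combinatorial separation in the endpoint graph into this absence of cancellation is the crux.

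Granting that coherence, a routine measure count finishes the proof: for a fraction of the points in the middle of the tower bounded below by a fixed multiple of $c$, uniformly in $\delta$ and $b$, all $2b + 1$ consecutive differences fall below $\delta$, so the hypothesis of Lemma~\ref{lem:quantitaveErgodicity} holds with a $\rho$ depending only on $c$. The lemma then forces $f$ to be constant, contradicting its choice and establishing that $T$ is mildly mixing.
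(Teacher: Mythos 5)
Your skeleton is right --- contradiction with a rigid $f$, verifying the hypothesis of Lemma~\ref{lem:quantitaveErgodicity} with $\rho$ a fixed multiple of $c$, towers of measure at least $c$ supplied by linear recurrence --- but the proposal has a genuine gap at exactly the point you label ``the crux'': nothing in it actually converts the type W hypothesis into pointwise smallness of the consecutive differences. Your telescoping identity controls only the full column sums $\sum_{j=0}^{n_i-1}\phi(T^j x)$, and smallness of such sums together with large individual terms is precisely what happens in systems that are \emph{not} mildly mixing: if $f$ is an eigenfunction with $f \circ T = e^{2\pi i\theta} f$, then $\phi = (e^{2\pi i\theta}-1)f$ has constant modulus while the column sums telescope to $(e^{2\pi i n_i\theta}-1)f \to 0$ along rigidity times. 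So ``forbidding the sign oscillations'' is not a technical upgrade of the aggregate bound; it is equivalent to the theorem, and the features you invoke (a discontinuity of $T$ in $J$ forcing $q=0$, one of $T^{-1}$ forcing $p=0$) supply no mechanism for it. You also misstate what type W gives: the endpoint identification graph records only the one-step matchings $T_-(\omega_j) = T_+(\omega_k)$, so $0$ and $1$ lying in distinct components does not mean the orbits of the endpoints ``never interact.''

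The paper's proof never looks at $\phi$ or at cancellation along columns. It takes the loop $0 \to \zeta_1 \to \cdots \to \zeta_s \to 0$ through the vertex $0$ in the endpoint identification graph (type W guarantees this loop avoids $1$, so every edge off $0$ is an interior identification $T_-(\zeta_j) = T_+(\zeta_{j+1})$), and erects towers of height $n$ over $\epsilon_n$-intervals centred at each $\zeta_j$ and over $I_0 = [0,\tfrac{\epsilon_n}{2})$, where $n$ is a single rigidity time chosen so that $\epsilon_n$ is tiny and the set $G$ on which $f_i \circ T^n \approx f_i$ for all $|i| \le b$ is nearly full; uniform continuity on a Lusin set $K$ then chains approximate equalities $f_i(y_j) \approx f_i(y_{j+1})$ around the loop, because $T^n$ carries same-level points of the adjacent towers at $\zeta_j$ and $\zeta_{j+1}$ into one common bottom floor. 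The decisive device, absent from your plan, is the level shift at the vertex $0$: since $T\zeta_1 = 0$ and $I_0$ is one-sided, traversing the loop compares $f_i(x)$ with $f_i(T^{-1}x)$ rather than with itself, yielding $|f_i(x) - f_i(T^{-1}x)| < \delta$ for all $-b \le i \le b$ on $90\%$ of $\tau_0$, hence on a set of measure at least $\tfrac{c}{10}$ --- exactly the hypothesis of Lemma~\ref{lem:quantitaveErgodicity}. Without an argument of this kind, or some other concrete use of the loop structure, your proposal states the main difficulty rather than resolving it.
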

\begin{proof}
Fix a type W interval exchange transformation $T$ on $[0,1)$ that is linearly recurrent.
Let
\[
0 \to \zeta_1 \to \cdots \to \zeta_s \to 0
\]
be the loop in the endpoint identification graph that contains $0$.
Put $c = \inf \{ n \epsilon_n : n \in \mathbb{N} \}$.
Assume that for some $f$ in $\lp^2(X,\mathscr{B},\mu)$ there is a sequence $n_i \to \infty$ such that $T^{n_i} f \to f$ in $\lp^2(X,\mathscr{B},\mu)$.
Write $f_l$ for $f \circ T^l$.
We show that for every $\delta > 0$ and every $b \in \mathbb{N}$ the set
\[
\{ x \in [0,1) : |f_i(T x) - f_i(x)| < \delta \textrm{ for all } -b \le i \le b \}
\]
has measure at least $\frac{c}{10}$.
It will then follows from Lemma~\ref{lem:quantitaveErgodicity} and ergodicity of all linearly recurrent, type W interval exchange transformations (\cite[Theorem~5.2]{MR2060994}) that $f$ is constant.
Thus the only rigid functions in $\lp^2(X,\mathscr{B},\mu)$ are the constant functions and $T$ is mildly mixing.

Fix $\delta > 0$ and $b \in \mathbb{N}$.
Using Lusin's theorem one can find a compact set $K \subset [0,1)$ with $\mu(K) \ge 1 - \frac{c}{300 s}$ on which each of $T^{-b} f,\dots,f,\dots,T^b f$ is uniformly continuous.
Fix $\eta > 0$ so small that whenever $x,y \in K$ with $|x-y| < \eta$ one has $|f(T^i x) - f(T^i y)| < \frac{\delta}{8s}$ for all $-b \le i \le b$.
Fix using Lemma~\ref{lem:rigidityChar} and the fact that $\epsilon_n \to 0$ a time $n \in \mathbb{N}$ such that
\begin{enumerate}
\item
$\epsilon_n < \min \{ \frac{c}{200}, \eta, \frac{\epsilon_1}{4} \}$;
\item
the set
\[
G = \{ x \in[0,1) : |f_i(T^n x) - f_i(x)| < \tfrac{\delta}{8s} \textrm{  for all } -b \le i \le b \}
\]
has measure at least $1 - \frac{c}{300 s}$.
\end{enumerate}
Put $H = G \cap K \cap T^{-n} K$.
We have $\mu(H) \ge 1 - \frac{c}{100 s}$.

For each $1 \le i \le s$ let $I_i$ be the interval $[\zeta_i - \frac{\epsilon_n}{2}, \zeta_i + \frac{\epsilon_n}{2})$.
As described in Section~\ref{sec:linearRecurrence} each $I_i$ determines a tower
\[
T^{-(n-1)} I_i, T^{-(n-2)} I_i, \dots, T^{-1} I_i, I_i
\]
of disjoint intervals with total measure at least $c$.
Write $\tau_i$ for this tower.
Put $I_0 = [0,\frac{\epsilon_n}{2})$.
Similarly, it is the roof of a tower $\tau_0$ with total measure at least $\frac{c}{2}$.
We claim that $90\%$ of the points $x$ in $\tau_0$ have all the following properties:
\begin{enumerate}
\item
$x \in T^{-\ell} I_0$ with $\ell \ne n-1$;
\item
$x \in H$ and $T^{-1} x \in H$;
\item
for every $1 \le i \le s$ one can find points $y_i$ in $H \cap T^{-\ell} [\zeta_i - \frac{\epsilon_n}{2},\zeta_i)$ and $z_i$ in $H \cap T^{-\ell}[\zeta_i,\zeta_i + \frac{\epsilon_n}{2})$.
\end{enumerate}
This follows from the following arguments.
\begin{enumerate}
\item
$99\%$ of the points in $\tau_0$ are not in the bottom level since $100 \epsilon_n < c/2$.
\item
$\mu(\tau_0 \cap H) \ge \frac{99c}{100s}$ and $\mu(\tau_0 \cap T^{-1} H) \ge \frac{99c}{100s}$ so $\mu(\tau_0 \cap H \cap T^{-1} H) \ge \frac{98c}{100s}$.
Thus $98\%$ of the points in $\tau_0$ satisfy Property 2.
\item
Suppose one can find $r$ distinct levels $T^{-l_1}I_0,\dots,T^{-l_r}I_0$ in $\tau_0$ and for each such level a ``defective'' tower $\tau_{m_i}$ with either $H \cap T^{-l_i} [\zeta_{m_i} - \frac{\epsilon_n}{2},\zeta_{m_i}) = \emptyset$ or $H \cap T^{-l_i}[\zeta_{m_i}, \zeta_{m_i} + \frac{\epsilon_n}{2}) = \emptyset$.
By pigeonhole at least $r/s$ of these levels have the same defective tower, say $\tau_m$.
But then $\mu(\tau_m \setminus H) \ge \tfrac{r}{s} \tfrac{\epsilon_n}{2} \ge \tfrac{r}{s} \tfrac{c}{2n}$.
But $\mu(\tau_m \setminus H) \le \frac{c}{100s}$ so $\tfrac{r}{n} \le 2\%$.
Thus $97\%$ of the points in $\tau_0$ enjoy Property 3.
\end{enumerate}
All told, at least $90\%$ of the points in $\tau_0$ satisfy all three properties.

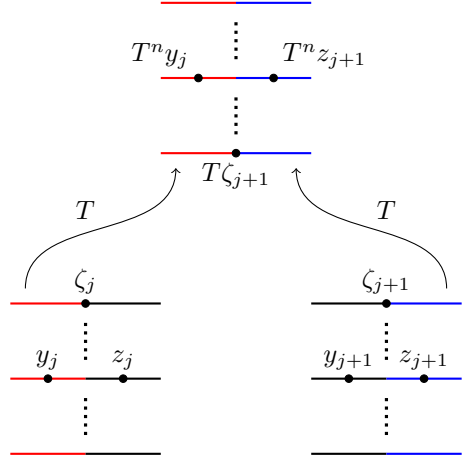
\begin{figure}[t]
\centering
\begin{tikzpicture}
\draw[red,thick] (-3,0)--(-2,0);
\draw[red,thick] (-3,1)--(-2,1);
\draw[red,thick] (-3,2)--(-2,2);
\draw[thick] (-2,0)--(-1,0);
\draw[thick] (-2,1)--(-1,1);
\draw[thick] (-2,2)--(-1,2);
\draw[thick] (1,0)--(2,0);
\draw[thick] (1,1)--(2,1);
\draw[thick] (1,2)--(2,2);
\draw[blue,thick] (2,0)--(3,0);
\draw[blue,thick] (2,1)--(3,1);
\draw[blue,thick] (2,2)--(3,2);
\draw[red,thick] (-1,4)--(0,4);
\draw[red,thick] (-1,5)--(0,5);
\draw[red,thick] (-1,6)--(0,6);
\draw[blue,thick] (0,4)--(1,4);
\draw[blue,thick] (0,5)--(1,5);
\draw[blue,thick] (0,6)--(1,6);
\draw[->] (-2.8,2.2) to [out=90,in=270] (-0.8,3.8);
\node[anchor=south] at (-2,3) {$T$};
\draw[->] (2.8,2.2) to [out=90,in=270] (0.8,3.8);
\node[anchor=south] at (2,3) {$T$};
\node[anchor=south] at (-2.5,1) {$y_j$};
\draw[fill] (-2.5,1) circle [radius=0.05];
\node[anchor=south] at (-1.5,1) {$z_j$};
\draw[fill] (-1.5,1) circle [radius=0.05];
\node[anchor=south] at (1.5,1) {$y_{j+1}$};
\draw[fill] (1.5,1) circle [radius=0.05];
\node[anchor=south] at (2.5,1) {$z_{j+1}$};
\draw[fill] (2.5,1) circle [radius=0.05];
\node[anchor=south east] at (-0.5,5) {$T^n y_j$};
\draw[fill] (-0.5,5) circle [radius=0.05];
\node[anchor=south west] at (0.5,5) {$T^n z_{j+1}$};
\draw[fill] (0.5,5) circle [radius=0.05];
\node[anchor=south] at (-2,2) {$\zeta_j$};
\draw[fill] (-2,2) circle [radius=0.05];
\node[anchor=south] at (2,2) {$\zeta_{j+1}$};
\draw[fill] (2,2) circle [radius=0.05];
\node[anchor=north] at (0,4) {$T\zeta_{j+1}$};
\draw[fill] (0,4) circle [radius=0.05];
\draw[dotted,very thick] (-2,0.25)--(-2,0.75);
\draw[dotted,very thick] (-2,1.25)--(-2,1.75);
\draw[dotted,very thick] (2,0.25)--(2,0.75);
\draw[dotted,very thick] (2,1.25)--(2,1.75);
\draw[dotted,very thick] (0,4.25)--(0,4.75);
\draw[dotted,very thick] (0,5.25)--(0,5.75);
\end{tikzpicture}
\caption{The towers and their relationships for an edge $\zeta_j \to \zeta_{j+1}$ in the endpoint identification graph with $0 \notin \{ \zeta_j,\zeta_{j+1} \}$.}
\label{fig:regularEdgeTowers}
\end{figure}

Fix a point $x$ in $\tau_0$ such that all three properties hold.
In particular, let $y_1,\dots,y_s,z_1,\dots,z_s$ and $\ell$ be as in Property 3.
For every $1 \le j \le s - 1$ and every edge $\zeta_j \to \zeta_{j+1}$ we have the estimates
\begin{equation}
\label{eqn:rigidtyRegularEdge}
|f_i(y_j) - f_i(T^n y_j)| < \tfrac{\delta}{8s}
\qquad
|f_i(T^n z_{j+1}) - f_i(z_{j+1})| < \tfrac{\delta}{8s}
\end{equation}
for all $-b \le i \le b$ since both $y_j$ and $z_{j+1}$ belong to $G$.
We also have
\[
|z_{j+1} - y_{j+1}| < \eta
\]
since $\{ y_{j+1}, z_{j+1} \} \subset T^{-\ell} [\zeta_{j+1} - \frac{\epsilon_n}{2}, \zeta_{j+1} + \frac{\epsilon_n}{2})$ and
\[
|T^n y_j - T^n z_{j+1}| < \eta
\]
since $[T \zeta_{j+1} - \frac{\epsilon_n}{2}, T \zeta_{j+1} + \frac{\epsilon_n}{2})$ contains $\{ T^{\ell + 1} y_j, T^{\ell+1} z_{j+1} \}$ and is the bottom floor of a tower of width $\epsilon_n$.
(See Figure~\ref{fig:regularEdgeTowers} for a schematic.)
These two inequalities imply
\[
|f_i(z_{j+1}) - f_i(y_{j+1})| < \tfrac{\delta}{8s}
\qquad
|f_i(T^n y_j) - f_i(T^n z_{j+1})| < \tfrac{\delta}{8s}
\]
for all $-b \le i \le b$ since $\{ y_j, y_{j+1}, z_{j+1} \} \subset K \cap T^{-n} K$.
Combined with \eqref{eqn:rigidtyRegularEdge} these give
\begin{equation}
\label{eqn:regularEdges}
|f_i(y_j) - f_i(y_{j+1})| < \tfrac{\delta}{2s}
\end{equation}
for all $-b \le i \le b$.

\afterpage{
\begin{figure}[t]
\centering
\begin{tikzpicture}
\draw[thick,blue] (-4,0)--(-3,0);
\draw[thick,blue] (-4,1)--(-3,1);
\draw[thick,blue] (-4,2)--(-3,2);
\draw[thick] (-3,0)--(-2,0);
\draw[thick] (-3,1)--(-2,1);
\draw[thick] (-3,2)--(-2,2);
\draw[thick] (2,0)--(3,0);
\draw[thick] (2,1)--(3,1);
\draw[thick] (2,2)--(3,2);
\draw[thick,red] (3,0)--(4,0);
\draw[thick,red] (3,1)--(4,1);
\draw[thick,red] (3,2)--(4,2);
\draw[thick] (-0.5,0)--(0.5,0);
\draw[thick] (-0.5,1)--(0.5,1);
\draw[thick,red] (-0.5,2)--(0.5,2);
\draw[blue,thick] (-1,4)--(0,4);
\draw[blue,thick] (-1,5)--(0,5);
\draw[blue,thick] (-1,6)--(0,6);
\draw[blue,thick] (-1,7)--(0,7);
\draw[red,thick] (0,4)--(1,4);
\draw[red,thick] (0,5)--(1,5);
\draw[red,thick] (0,6)--(1,6);
\draw[red,thick] (0,7)--(1,7);
%
\draw[->] (-3.8,2.2) to [out=90,in=270] (-0.8,3.8);
\node[anchor=south] at (-3,3) {$T$};
\draw[->] (0.3,2.2) to [out=90,in=270] (0.8,3.8);
\node[anchor=west] at (0.7,3) {$T$};
\draw[->] (3.8,2.2) to [out=90,in=0] (3,3) to [out=180,in=270] (0.3,1.8);
\node[anchor=south] at (3,3) {$T$};
%
\node[anchor=south] at (-3.5,1) {$y_s$};
\draw[fill] (-3.5,1) circle [radius=0.05];
\node[anchor=south] at (-2.5,1) {$z_s$};
\draw[fill] (-2.5,1) circle [radius=0.05];
\node[anchor=south] at (-3,2) {$\zeta_s$};
\draw[fill] (-3,2) circle [radius=0.05];
\node[anchor=south] at (2.5,1) {$y_1$};
\draw[fill] (2.5,1) circle [radius=0.05];
\node[anchor=south] at (3.5,1) {$z_1$};
\draw[fill] (3.5,1) circle [radius=0.05];
\node[anchor=south] at (3,2) {$\zeta_1$};
\draw[fill] (3,2) circle [radius=0.05];
\node[anchor=south west] at (0.3,1) {$x$};
\draw[fill] (0.3,1) circle [radius=0.05];
\node[anchor=south] at (-0.5,2) {$0=T\zeta_1$};
\draw[fill] (-0.5,2) circle [radius=0.05];
\node[anchor=south east] at (-0.5,6) {$T^n y_s$};
\draw[fill] (-0.5,6) circle [radius=0.05];
\node[anchor=south west] at (0.8,6) {$T^n x$};
\draw[fill] (0.8,6) circle [radius=0.05];
\node[anchor=south west] at (0.5,5) {$T^n z_1$};
\draw[fill] (0.5,5) circle [radius=0.05];
\node[anchor=north] at (0,4) {$T0$};
\draw[fill] (0,4) circle [radius=0.05];
\draw[dotted,very thick] (-3,0.25)--(-3,0.75);
\draw[dotted,very thick] (-3,1.25)--(-3,1.75);
\draw[dotted,very thick] (0,0.25)--(0,0.75);
\draw[dotted,very thick] (0,1.25)--(0,1.75);
\draw[dotted,very thick] (3,0.25)--(3,0.75);
\draw[dotted,very thick] (3,1.25)--(3,1.75);
\draw[dotted,very thick] (0,4.25)--(0,4.75);
\draw[dotted,very thick] (0,6.25)--(0,6.75);
\end{tikzpicture}
\caption{The towers and their relationships for the edges $0 \to \zeta_1$ and $\zeta_s \to 0$ in the endpoint identification graph. Note that $T^n z_1$ and $T^{n-1} x$ are in the same level since $T \zeta_1 = 0$.}
\label{fig:specialEdgeTowers}
\end{figure}
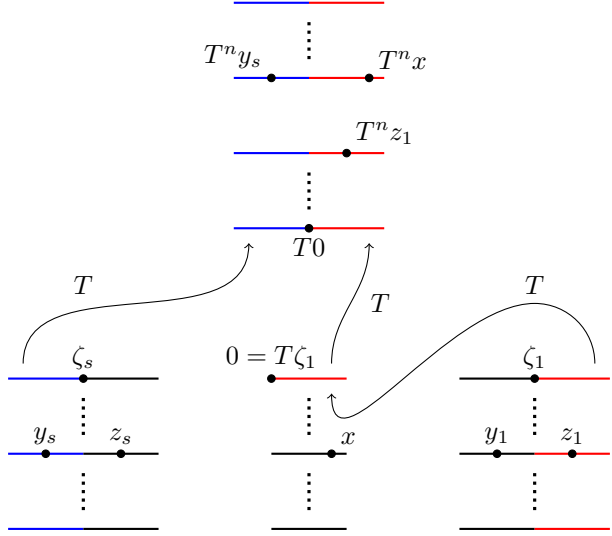
}

Considering next the edge $\zeta_s \to 0$, we have the estimates
\begin{equation}
\label{eqn:rigidtyIntoZero}
|f_i(x) - f_i(T^n x)| < \tfrac{\delta}{8s}
\qquad
|f_i(T^n y_s) - f_i(y_s)| < \tfrac{\delta}{8s}
\end{equation}
for all $-b \le i \le b$ since $\{ x, y_s \} \in G$.
Also
\[
|T^n x - T^n y_s| < \eta
\]
since $[T(0) - \frac{\epsilon_n}{2}, T(0) + \frac{\epsilon_n}{2})$ contains $\{ T^{\ell + 1} x, T^{\ell + 1} y_s\}$ and is the bottom floor of a tower of width $\epsilon_n$.
(See Figure~\ref{fig:specialEdgeTowers} for a schematic.)
Thus
\[
|f_i(T^n x) - f_i(T^n  y_s)| \le \tfrac{\delta}{8s}
\]
for all $-b \le i \le b$ since $\{x,y_s\} \subset T^{-n} K$.
Combined with \eqref{eqn:rigidtyIntoZero} we get
\begin{equation}
\label{eqn:intoZero}
|f_i(x) - f_i(y_s)| \le \tfrac{\delta}{2s}
\end{equation}
for all $-b \le i \le b$.

Using \eqref{eqn:regularEdges} for all $1 \le j \le s-1$ and together with \eqref{eqn:intoZero} yields
\begin{equation}
\label{eqn:almostThere}
|f_i(x) - f_i(y_1)| < \tfrac{\delta}{2}
\end{equation}
for all $-b \le i \le b$.
Now we use the edge $0 \to \zeta_1$ to pick up some invariance.
First, note that
\begin{equation}
\label{eqn:rigidityOutofZero}
|f_i(z_1) - f_i(T^n z_1)| < \tfrac{\delta}{8s}
\qquad
|f_i(T^{n}(T^{-1} x)) - f_i(T^{-1} x)| < \tfrac{\delta}{8s}
\end{equation}
for all $-b \le i \le b$ since $\{ z_1, T^{-1} x \} \subset H$.
We also have
\[
|y_1 - z_1| < \eta
\]
because $\{y_1,z_1\} \subset T^{-\ell} [\zeta_1 - \frac{\epsilon_n}{2},\zeta_1 + \frac{\epsilon_n}{2})$ and
\[
|T^n z_1 - T^n(T^{-1} x)| < \eta
\]
because $T^n z_1$ and $T^n(T^{-1} x)$ are both in the interval $T^{n-\ell-1} [0,\tfrac{\epsilon_n}{2})$.
(See Figure~\ref{fig:specialEdgeTowers} for a schematic.)
These two estimates imply
\[
|f_i(y_1) - f_i(z_1)| < \tfrac{\delta}{8s}
\qquad
|f_i(T^n z_1) - f_i(T^n(T^{-1} x))| < \tfrac{\delta}{8s}
\]
for all $-b \le i \le b$ because $\{y_1,z_1,T^n(T^{-1} x), T^n z_1 \} \subset K$.
Together with \eqref{eqn:rigidityOutofZero} these imply
\[
|f_i(y_1) - f_i(T^{-1} x)| < \tfrac{\delta}{2s}
\]
for all $-b \le i \le b$.
Finally, combined with \eqref{eqn:almostThere} we get
\[
|f_i(x) - f_i(T^{-1} x)| < \delta
\]
for all $-b \le i \le b$.

Since $x$ was an arbitrary point in $\tau_0$ satisfying Properties 1, 2 and 3, the set
\[
\{ x \in [0,1) : |f_i(x) - f_i(T^{-1} x)| < \delta \textrm{ for all }  -b \le i \le b \}
\]
has measure at least $\tfrac{c}{10}$.
Since $\delta > 0$ and $b \in \mathbb{N}$ were arbitrary, the function $f$ is constant by Lemma~\ref{lem:quantitaveErgodicity}.
\end{proof}

\printbibliography

\end{document}